\newtheorem*{theorem*}{Theorem} 
\newtheorem{lemma}{Lemma} 
\newtheorem{corollary}{Corollary} 
\theoremstyle{definition}
\def\supp{\operatorname{{supp}}}
\begin{document} 
\title[Nonexistence of Extremally Disconnected Groups]
{Nonexistence of Countable\\ Extremally Disconnected Groups\\ with Many Open Subgroups}

\author{Olga V. Sipacheva}

\begin{abstract} 
It is proved that the existence of a countable extremally disconnected Boolean topological 
group containing a family of open subgroups  
whose intersection has empty interior implies the existence of a rapid ultrafilter.
\end{abstract} 

\address 
{Department of General Topology and Geometry, 
Mechanics and Mathematics Faculty, 
Moscow State University}

\email{o-sipa@yandex.ru}

\maketitle

\section{Preliminaries}

The problem of the existence in ZFC of a nondiscrete Hausdorff extremally disconnected topological 
group was posed by Arhangel'skii in 1967 and has been extensively studied since then. Several 
consistent examples have been constructed \cite{Sirota, Louveau, Malykhin, Malykhin2, Zel1, Zel2}, 
but Arhangel'skii's problem remains unsolved. 

All (consistent) examples of extremally disconnected groups known to the author have a base of 
neighborhoods of the identity element consisting of subgroups. The main result of this paper is 
that a countable ZFC example cannot be constructed in this way; to be more precise, we  prove that 
the existence of a countable extremally disconnected Boolean topological 
group containing a family of open subgroups  
whose intersection has empty interior (or, equivalently, admitting a continuous isomorphism onto 
the direct sum $\bigoplus_\omega\mathbb Z_2$ of countably many copies of $\mathbb Z_2$ 
with the product topology) implies the existence of a rapid ultrafilter. Thus, if 
there exists in ZFC a countable nondiscrete extremally disconnected group, then there must exist 
such a group without open subgroups. 

Recall that a topological space is said to be 
\emph{extremally disconnected} if the closure of any open 
set in this space is open (or, equivalently, 
the closures of two disjoint open sets are disjoint). 
Malykhin~\cite{Malykhin} showed that any extremally 
disconnected topological group must contain an open (and 
therefore closed) Boolean subgroup (i.e., a subgroup 
consisting of elements of order~2). Thus, in studying the existence 
of extremally  disconnected groups, it suffices to consider Boolean groups.

Any countable Boolean group is a countable-dimensional vector space over the field $\mathbb 
Z_2$ and, therefore, can be represented as the set of finitely supported maps $g\colon \omega \to 
\mathbb Z_2$ (i.e., is isomorphic to the direct sum $\bigoplus_\omega\mathbb Z_2$ of  
countably many copies of $\mathbb Z_2$). Any 
such isomorphic representation $\varphi\colon G\to \bigoplus_\omega\mathbb Z_2$ 
is determined by the choice 
of a basis $E=\{e_n: n\in\omega\}$ in $G$ for which 
$\varphi (e_i)=(\underbrace{0, \dots, 0}_{\text{$i$ 
times}}, 1, 0, 0\dots)$. We assume $\bigoplus_\omega\mathbb Z_2$ to be endowed with 
the product topology. Thus, 
each isomorphism $\varphi\colon G\to \bigoplus_\omega\mathbb Z_2$ induces a topology on $G$, 
which we refer to 
as the \emph{product topology induced} by $\varphi$ (or \emph{associated} with the corresponding 
basis $E$).  

Given an isomorphism $\varphi\colon G\to \bigoplus_\omega\mathbb Z_2$, we can treat elements of 
$G\cong \bigoplus_\omega\mathbb Z_2$ as maps to $\mathbb Z_2$ finitely supported on $\omega$, 
so that the maps 
$$ 
\min\colon G\to \omega, \qquad \min g= \min \supp g, 
$$ 
and 
$$ 
\max\colon G\to \omega, \qquad \max g= \max \supp g 
$$
are defined.

We denote the zero element of $G$ by $\mathbf{0}$. 

All topological groups under consideration are assumed to be Hausdorff. 

Given $k, n\in \omega$, we write 
$[k, n]$ for $\{i\in \omega: k\le i\le n\}$. The sign $\bigsqcup$ denotes disjoint union. Thus,  
$S= \bigsqcup_{n\in \omega} S_n$ means that the $S_i$ form a disjoint partition of $S$. 

An ultrafilter $\mathcal U$ on $\omega$ is a \emph{$P$-point ultrafilter} if,  
given any partition $\omega=\bigsqcup_{n\in \omega} A_n$ with $A_n\notin 
\mathcal U$, $n\in \omega$, there exists an $A\in \mathcal {U}$ such that $|A\cap A_n|<\aleph_0$ 
for any $n$. The nonexistence of $P$-point ultrafilters is consistent with ZFC~\cite{Shelah}.

An ultrafilter $\mathcal U$ on $\omega$ is \emph{selective}, or \emph{Ramsey}, if, given any 
partition $\omega=\bigsqcup_{n\in \omega} A_n$ with $A_n\notin 
\mathcal U$, $n\in \omega$, there exists 
an $A\in \mathcal {U}$ such that $|A\cap A_n|\le 1$ for any $n$.

An ultrafilter $\mathcal U$ on $\omega$ is said to be \emph{rapid} if every function 
$\omega\to\omega$ is majorized by the increasing enumeration of some element of $\mathcal U$.  

Any Ramsey ultrafilter is rapid (and a $P$-point). The 
nonexistence of rapid ultrafilters is consistent with ZFC~\cite{rapid}.

Note that if $X$ and $Y$ are sets, $\mathcal U$ is an ultrafilter on $X$, and $f\colon 
X\to Y$ is a map,  then the family $\mathcal V= \{A \subset Y: f^{-1}(A) \in \mathcal U\}$ is an   
ultrafilter on $f(X)$. We denote it by $f(\mathcal U)$. 

\section{Statements}

The main result of this paper is the following theorem. 

\begin{theorem*}
The existence of a countable extremally disconnected Boolean topological 
group containing a family of open subgroups  
whose intersection has empty interior implies the 
existence of a rapid ultrafilter. 
\end{theorem*}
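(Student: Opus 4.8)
The plan is to normalize the group first. By Malykhin's theorem quoted above it suffices to treat a Boolean group, and the equivalence recorded in the introduction lets me assume the underlying group is $G=\bigoplus_\omega\mathbb Z_2$ carrying an extremally disconnected group topology $\tau$ that refines the product topology $\pi$. Concretely, the subgroups $U_n=\{g:\min g\ge n\}\cup\{\mathbf 0\}$ are $\tau$-open, being $\varphi$-preimages of basic product neighborhoods; they are clopen subgroups with $\bigcap_n U_n=\{\mathbf 0\}$, and $\tau$ is nondiscrete precisely because this intersection has empty interior. Two structural facts I will use throughout: each quotient $G/U_n$ is finite of order $2^n$, and the clopen sets $P_k:=U_k\setminus U_{k+1}=\{g:\min g=k\}$ partition $G\setminus\{\mathbf 0\}$.

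Next I would extract an ultrafilter from extremal disconnectedness. Define $\mathcal U=\{A\subseteq\omega:\mathbf 0\in\overline{\bigcup_{k\in A}P_k}\}$, all closures taken in $\tau$. For each $A$ the sets $\bigcup_{k\in A}P_k$ and $\bigcup_{k\notin A}P_k$ are disjoint and open, so extremal disconnectedness forces their closures to be disjoint; since their union is $G\setminus\{\mathbf 0\}$ and $\mathbf 0$ is not isolated, $\mathbf 0$ lies in exactly one of the two closures. Hence $\mathcal U$ is an ultrafilter, and it is free because a finite union of the clopen sets $P_k$ is clopen and omits $\mathbf 0$. The mechanism linking $\mathcal U$ to the topology is the trace property: for every $\tau$-neighborhood $V$ of $\mathbf 0$ the set $A_V:=\{k:V\cap P_k\neq\emptyset\}$ belongs to $\mathcal U$, because $V$ is then a neighborhood of $\mathbf 0$ disjoint from $\bigcup_{k\notin A_V}P_k$, so that $\mathbf 0\notin\overline{\bigcup_{k\notin A_V}P_k}$. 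In the notation of the preliminaries, $\mathcal U$ is the image $\min(\mathcal W)$ of the ``closure-at-$\mathbf 0$'' ultrafilter $\mathcal W$ on $G\setminus\{\mathbf 0\}$ under the map $\min$.

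It then remains to prove that $\mathcal U$ is rapid, i.e. that for every $f\colon\omega\to\omega$ some member of $\mathcal U$ has increasing enumeration dominating $f$. By the trace property this reduces to producing, for each $f$, a single $\tau$-neighborhood $V$ of $\mathbf 0$ whose level set $A_V$ is as sparse as required; equivalently, one must show that the neighborhood filter of $\mathbf 0$ reaches the clopen levels $P_k$ only along arbitrarily thin index sets. Since a product neighborhood $U_m$ meets all levels $k\ge m$ and is useless here, the argument must exploit genuinely $\tau$-fine neighborhoods produced by extremal disconnectedness. I would build $V$ by a fusion: recursively choosing, for $n=0,1,2,\dots$, clopen pieces obtained as $\tau$-closures of open sets lying in the successive blocks $\{g:\min g\in[f(n),f(n+1)]\}$, at each step using extremal disconnectedness to keep $\mathbf 0$ in the closure of the accumulated union while the finiteness of $G/U_{f(n+1)}$ bounds how many new levels must be admitted, so that the resulting $A_V$ meets $[0,f(n)]$ in at most $n$ points.

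The hard part will be exactly this fusion. Extremal disconnectedness guarantees that closures of open sets are clopen and that $\mathbf 0$ can be retained in the closure of a decreasing family of open sets, but controlling \emph{simultaneously} that $\mathbf 0$ survives in the closure and that only a dominating, hence sparse, set of levels is ever touched requires careful bookkeeping that plays the combinatorics of the finite quotients $G/U_n$ against the splitting property of $\tau$. Once the fusion furnishes such a neighborhood for every $f$, the trace property delivers a dominating member of $\mathcal U$ for each $f$, so $\mathcal U$ is rapid and the theorem follows.
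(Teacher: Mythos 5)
Your setup is sound as far as it goes: the reduction to $G=\bigoplus_\omega\mathbb Z_2$ with a finer extremally disconnected topology is essentially the content of the paper's Lemma~2 (which does require an argument --- one must rechoose the basis so that each open subgroup is spanned by a subset of it --- but you may cite the stated equivalence), and the ultrafilter $\mathcal U=\min(\mathcal W)$ extracted from extremal disconnectedness, together with the trace property $A_V\in\mathcal U$ for every neighborhood $V$ of $\mathbf 0$, is correct. The problem is that the entire weight of the theorem rests on the step you leave as a sketch: producing, for each $f$, a member of $\mathcal U$ whose increasing enumeration dominates $f$. The proposed ``fusion'' does not work as described. At each stage you would pass to the closure of an open set having $\mathbf 0$ in its closure, which is indeed a clopen neighborhood of $\mathbf 0$; but a decreasing $\omega$-sequence of clopen neighborhoods need not have a neighborhood as its intersection (the subgroups $U_n$ themselves are the standard counterexample), so no single $\tau$-neighborhood $V$ emerges at the end, and the finiteness of $G/U_{f(n+1)}$ gives no sparseness bound on which levels $P_k$ a neighborhood must meet --- every neighborhood trivially meets only finitely many levels below $f(n+1)$, which is not the required estimate. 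More fundamentally, it is not at all clear that $\min(\mathcal W)$ is rapid, and the paper never proves this.

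The paper's route is genuinely different. It splits into two cases according to whether every set $U_f=\{x\ne\mathbf 0:\max x>f(\min x)\}\cup\{\mathbf 0\}$ is a neighborhood of $\mathbf 0$. If yes, extremal disconnectedness is not even needed: a pigeonhole argument (Lemma~1 --- among more than $2^k$ nonzero vectors two agree in the first $k$ coordinates, so their sum has large $\min$ and hence, by the hypothesis $2V\subset U_g$, large $\max$) shows that $\max(V\setminus\{\mathbf 0\})$ dominates $f$, so the image under $\max$ (not $\min$) of any ultrafilter converging to $\mathbf 0$ is rapid. If no, then some $C_f=G\setminus U_f$ accumulates at $\mathbf 0$ and is strongly discrete; here extremal disconnectedness enters through Zelenyuk's results, which make the trace ultrafilter on $C_f$ partially selective, and a final lemma converts partial selectivity into a Ramsey (hence rapid) ultrafilter by collapsing the intervals $[M_n,M_{n+1}-1]$ obtained by iterating $n\mapsto m_n$. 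Neither case produces the ultrafilter you propose, and your write-up supplies no substitute for either argument, so the proof is incomplete at its essential point.
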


\begin{corollary}
\label{c0}
The nonexistence of a countable extremally disconnected Boolean topological 
group containing a family of open subgroups  
whose intersection has empty interior is consistent with ZFC. 
\end{corollary}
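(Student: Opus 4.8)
The plan is to turn the topology of the group into a rapid ultrafilter on $\omega$, the rapidity coming from a selection argument powered by extremal disconnectedness. By Malykhin's theorem \cite{Malykhin} we may assume the group is Boolean, and by the equivalence recorded in the introduction we may take $G=\bigoplus_\omega\mathbb Z_2$ carrying an extremally disconnected Hausdorff group topology $\tau$ that refines the product topology $\tau_p$; nondiscreteness of $\tau$ is exactly the statement that the intersection $\{\mathbf 0\}$ of the open subgroups has empty interior. For $n\in\omega$ put $P_n=\{g\in G:\min g\ge n\}$. Each $P_n$ is $\tau_p$-open, hence $\tau$-open, and, being an open subgroup, is $\tau$-closed. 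Since $\tau$ is not discrete, $\mathbf 0$ is non-isolated in every open subgroup $P_n$, so $\mathbf 0\in\overline{P_n\setminus\{\mathbf 0\}}^{\,\tau}$ for all $n$. I also record the finite subgroups $Q_m=\{g:\max g\le m\}=\langle e_0,\dots,e_m\rangle$, which are $\tau$-closed (finite, Hausdorff) and do not accumulate at $\mathbf 0$.

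Next I would extract the ultrafilter. The trace on $G\setminus\{\mathbf 0\}$ of the $\tau$-neighbourhood filter $\mathcal N$ of $\mathbf 0$ is proper, so fix an ultrafilter $p$ on the discrete set $G\setminus\{\mathbf 0\}$ extending it; then $p\to\mathbf 0$ and $P_n\setminus\{\mathbf 0\}\in p$ for every $n$. Set $\mathcal U=\max(p)$, the image of $p$ under $\max\colon G\setminus\{\mathbf 0\}\to\omega$. Since $\max g\ge\min g\ge n$ on $P_n\setminus\{\mathbf 0\}\in p$, we get $[n,\infty)\in\mathcal U$ for all $n$, so $\mathcal U$ is a free ultrafilter. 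Using the characterization that $\mathcal U$ is rapid iff for every increasing $f\colon\omega\to\omega$ some $A\in\mathcal U$ has increasing enumeration majorizing $f$, the whole proof reduces to a single combinatorial statement: for every increasing $f$ there is a clopen neighbourhood $O$ of $\mathbf 0$ meeting each finite subgroup $Q_{f(n)}$ in at most $n+1$ points. Indeed, then $A=\max(O\setminus\{\mathbf 0\})$ lies in $\mathcal U$ (because $O\setminus\{\mathbf 0\}\in\mathcal N\subseteq p$) and satisfies $|A\cap[0,f(n)]|\le n$, so its enumeration majorizes $f$.

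To build such an $O$ I would run a fusion over $n$, exploiting two features of extremal disconnectedness. First, since closures of open sets are clopen, whenever $\mathbf 0\in\overline O$ for an open $O$ the set $\overline O$ is automatically a clopen neighbourhood of $\mathbf 0$; this lets partial stages of the construction ``close up'' into genuine neighbourhoods and keeps $\mathbf 0$ in the interior. Second, subsets of $G\setminus\{\mathbf 0\}$ that are separated by disjoint open sets have disjoint closures; applied to the finite (hence separated) set $Q_{f(n)}\setminus\{\mathbf 0\}$ this lets me discard all but boundedly many of its points at stage $n$ while, thanks to non-isolation of $\mathbf 0$ in the tail subgroups $P_k$, retaining enough elements of arbitrarily high $\min$ so that $\mathbf 0$ remains in the closure of what survives and $O$ stays a neighbourhood.

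The main obstacle is precisely this fusion. Two things must be reconciled at once. The topology $\tau$ need not be first countable, so the thinning cannot be organized along a single convergent sequence; it must instead be carried out through a diagonal over clopen neighbourhoods, and extremal disconnectedness is what guarantees that the intermediate objects remain clopen and that $\mathbf 0$ survives as an interior point at the limit. Moreover, the separation tool lets me peel off only finitely many shells $\{g:\max g=k\}$ at a time, and this is exactly what caps the achievable sparseness at the bound ``at most $n+1$ points in $Q_{f(n)}$'' — yielding a \emph{rapid} ultrafilter — rather than the ``one point per shell'' bound that would produce a \emph{selective} one. Once $\mathcal U$ is shown to be rapid the Theorem follows, and Corollary~\ref{c0} is then immediate from the consistency of the nonexistence of rapid ultrafilters \cite{rapid}.
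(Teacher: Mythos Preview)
Your final sentence is right, and it is all the paper says for this corollary: once the theorem is in hand, Corollary~\ref{c0} is immediate from the consistency of ``no rapid ultrafilters'' \cite{rapid}. The substance of your proposal is an outline for the theorem itself, and there it diverges from the paper and contains a real gap.

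The paper does not try to build a single sparse neighbourhood. It splits on whether condition~\eqref{eq1} holds. When it does, Lemma~\ref{l1} (a pigeonhole: among more than $2^k$ nonzero vectors two agree on the first $k$ coordinates, so their sum has $\min>k$) shows that $\max$ of a suitable neighbourhood enumerates faster than any given $f$, and the pushforward ultrafilter is rapid; this much agrees with your sketch. When \eqref{eq1} fails, some $C_f$ accumulates at $\mathbf 0$; the paper shows $C_f$ is strongly discrete, invokes Zelenyuk to obtain a partially selective ultrafilter on $C_f$ with selector $c\mapsto U_c$, and then proves a separate lemma that any such ultrafilter maps onto a Ramsey (hence rapid) one.

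Your plan collapses this dichotomy into a fusion producing, for every increasing $f$, a clopen neighbourhood $O$ with $|O\cap Q_{f(n)}|\le n+1$. You yourself flag this as ``the main obstacle'', and it is not carried out. The two features of extremal disconnectedness you cite let you trim finitely many points from a clopen neighbourhood at each finite stage, but they give no control at the $\omega$-limit: a decreasing sequence of clopen neighbourhoods need not intersect to a neighbourhood, and in a closure-of-union construction you must arrange $\mathbf 0$ to lie in the interior of $\overline W$ while $\overline W$ meets each $Q_{f(n)}$ in at most $n+1$ points---which is exactly the claim to be proved. The paper's need for the entire second case, with Zelenyuk's machinery and the Ramsey-ultrafilter lemma, is evidence that this shortcut is not available; as written, the fusion step is a gap.
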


The proof of the theorem uses the following two lemmas.

 \begin{lemma}
\label{l1}
Suppose that $G$ is a countable Boolean topological group which can be represented 
as $G\cong \bigoplus_\omega\mathbb Z_2$ in such a way that 
\begin{equation}
\label{eq1}
\begin{aligned}
&\text{for any function $f\colon \omega\to\omega$, the set}\\
&U_f = \{x\ne \mathbf{0}: \max x > f(\min x)\} \cup \{\mathbf{0}\}\\
&\text{contains an open neighborhood of $\mathbf{0}$.}
\end{aligned} 
\end{equation}
Let $f\colon \omega\to \omega$ be any increasing function, and let $V$ be a neighborhood of zero 
such that $2V\subset U_g$ for $g\colon \omega\to \omega$ defined by $g(n)=f(2^{n+1})$. Then 
the increasing enumeration $\{m_0, m_1, \dots\}$ of $\max (V\setminus \{\mathbf 0\})$ majorizes $f$, 
i.e., $m_i> f(i)$ for all $i\in \omega$. 
\end{lemma}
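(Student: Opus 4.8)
The plan is to argue by contradiction: assume that $m_i \le f(i)$ for some index $i \ge 1$, and manufacture a pair of elements of $V$ whose sum witnesses a violation of the inclusion $2V \subset U_g$. First I would unwind what the assumption buys us. Since $m_0 < m_1 < \dots < m_i$ are distinct elements of $\max(V \setminus \{\mathbf 0\})$ all lying at or below $f(i)$, there are at least $i+1$ distinct values $n \le f(i)$ occurring as $\max v$ for some $v \in V \setminus \{\mathbf 0\}$. Choosing one such $v$ for each value yields $i+1$ pairwise distinct nonzero elements $v_0, \dots, v_i \in V$, each with $\supp v_j \subseteq [0, f(i)]$ (because $\max v_j \le f(i)$).

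The core of the argument is a pigeonhole step on initial segments. Restricting each $v_j$ to the coordinates $[0, m-1]$ defines a map from our $(i+1)$-element set into $\mathbb Z_2^{[0,m-1]}$, whose range has size $2^m$. As soon as $2^m < i+1$, two distinct elements $v_a, v_b$ agree on $[0, m-1]$; then $w = v_a + v_b$ is a nonzero element supported inside $[m, f(i)]$, so that $\min w \ge m$ while $\max w \le f(i)$. Since $\mathbf 0 \in V$ gives $w \in 2V \subset U_g$ with $w \ne \mathbf 0$, the defining property of $U_g$ forces $\max w > g(\min w) = f(2^{\min w + 1}) \ge f(2^{m+1})$, using that $f$ is increasing and $\min w \ge m$.

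The delicate point — and the step I expect to be the main obstacle — is choosing $m$ so that the two competing demands are met simultaneously: the counting requires $2^m < i+1$, i.e.\ $m \le \log_2 i$, while the final contradiction requires $f(2^{m+1}) \ge f(i)$, i.e.\ (as $f$ is increasing) $2^{m+1} \ge i$, i.e.\ $m \ge \log_2 i - 1$. These are compatible precisely because the exponential gap in the definition $g(n) = f(2^{n+1})$ leaves room of width just over $1$ between the two thresholds; concretely $m = \lfloor \log_2 i \rfloor$ satisfies both $2^m \le i$ and $2^{m+1} > i$. With this $m$ the chain $\max w > f(2^{m+1}) \ge f(i) \ge \max w$ is absurd, completing the contradiction for every $i \ge 1$. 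Finally I would dispose of the base case $i = 0$ separately: since $\mathbf 0 \in V$, any $v \in V \setminus \{\mathbf 0\}$ already lies in $2V \subset U_g$, whence $\max v > f(2^{\min v + 1}) \ge f(2) > f(0)$, so in particular $m_0 > f(0)$.
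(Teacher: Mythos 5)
Your proof is correct and rests on exactly the same mechanism as the paper's: a pigeonhole on the restrictions of $i+1$ elements of $V$ to their first $\lfloor\log_2 i\rfloor$ coordinates, producing a nonzero sum in $2V\subset U_g$ with large $\min$, which the exponential gap in $g(n)=f(2^{n+1})$ turns into the bound $m_i>f(i)$. The only difference is presentational — you run the argument as a contradiction from $m_i\le f(i)$, whereas the paper chains the same inequalities directly — so no further comment is needed.
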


\begin{lemma}
\label{l2} 
Let $G$ be a countable nondiscrete  Boolean topological group which contains 
a (countable) decreasing family of open subgroups $G_i \subset G$, $i \in \omega$, with 
$\bigcap G_i = \{\mathbf{0}\}$. Then there exists a continuous isomorphism $\varphi\colon 
G\to \bigoplus_\omega\mathbb Z_2$, i.e., an isomorphism $\varphi$ such that 
the product topology on $G$ induced by $\varphi$ is contained in the given topology. 
\end{lemma}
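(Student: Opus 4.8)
The plan is to reduce the statement to a purely combinatorial construction of a basis. First fix the isomorphism type: since $G$ is Hausdorff and nondiscrete it is infinite, so as an abstract group $G$ is a countable-dimensional vector space over $\mathbb Z_2$, i.e. $G\cong\bigoplus_\omega\mathbb Z_2$ algebraically. Any isomorphism $\varphi\colon G\to\bigoplus_\omega\mathbb Z_2$ is determined by a basis $\{e_n:n\in\omega\}$ with $\varphi(e_n)$ the $n$-th standard vector, and the basic neighborhoods of $\mathbf 0$ in the product topology are the subgroups $W_n=\{y:y(i)=0\text{ for }i<n\}$. Since $\varphi^{-1}(W_n)$ is exactly the tail subgroup $H_n=\langle e_n,e_{n+1},\dots\rangle$ and the $W_n$ form a base at $\mathbf 0$, the induced product topology is contained in the given topology precisely when every $H_n$ is open in $G$. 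Thus it suffices to produce a basis all of whose tail subgroups are open.

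I would first make two harmless reductions. Replacing $G_i$ by $G_0\cap\dots\cap G_i$, I may assume the family is decreasing; and I record the standard fact that any subgroup of $G$ containing an open subgroup is itself open (being a union of cosets of it). The openness of the tails will be arranged through this fact, by keeping each tail large enough to contain some $G_i$.

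The heart is a recursion that builds the basis while maintaining an internal direct decomposition $G=F_n\oplus C_n$, where $F_n=\langle e_0,\dots,e_{n-1}\rangle$ has dimension $n$ and $C_n$ is an open subgroup; I start from $F_0=\{\mathbf 0\}$, $C_0=G$. Fix an enumeration $G\setminus\{\mathbf 0\}=\{g_1,g_2,\dots\}$. At step $n$ I take $g$ to be the element of least index not lying in $F_n$ (it exists because $G$ is infinite) and write $g=a+b$ with $a\in F_n$ and $\mathbf 0\ne b\in C_n$; I set $e_n=b$, so that $g\in F_{n+1}$ and the least missing index strictly increases, forcing the $e_n$ to span $G$ in the limit. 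To split off $C_{n+1}$ I choose $i$ with $b\notin G_i$ (possible since $\bigcap G_i=\{\mathbf 0\}$), pass to the $\mathbb Z_2$-vector space $C_n/(G_i\cap C_n)$ in which the image of $b$ is nonzero, pick a hyperplane avoiding that image, and let $C_{n+1}$ be its preimage in $C_n$. Then $C_{n+1}$ is an open (it contains $G_i\cap C_n$) index-two subgroup of $C_n$ with $b\notin C_{n+1}$, so $C_n=\langle e_n\rangle\oplus C_{n+1}$ and the decomposition persists.

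Finally I would verify that this does the job. Independence and spanning of $\{e_n\}$ make $\varphi$ an isomorphism. A short induction on the relations $C_n=\langle e_n\rangle\oplus C_{n+1}$ together with spanning shows $H_n=\langle e_n,e_{n+1},\dots\rangle=C_n$, which is open by construction, giving continuity; it also yields $\bigcap_n C_n=\{\mathbf 0\}$ for free, since any $g$ in the intersection lies in some $F_m$ and in $C_m$, hence in $F_m\cap C_m=\{\mathbf 0\}$. The main obstacle is exactly the tension in this recursion between the two requirements: spanning demands that every group element be pulled into a finite initial segment $F_m$, while openness of the tails demands that each $C_n=H_n$ still swallow an entire open subgroup $G_i$. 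The decomposition $G=F_n\oplus C_n$ together with the index-two splitting is what reconciles them, and checking that this splitting can always be carried out \emph{inside} $C_n$ (rather than only in $G$) is the point requiring care.
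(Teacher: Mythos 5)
Your proof is correct, but it takes a genuinely different route from the paper's. Both arguments share the same reduction --- the induced product topology is contained in $\tau$ iff every tail subgroup $H_n=\langle e_n,e_{n+1},\dots\rangle$ is open, and a subgroup containing an open subgroup is open --- but the basis constructions diverge. The paper refines the chain $G_0\supset G_1\supset\dots$ into a well-ordered chain $\{H_\alpha\}_{\alpha\in I}$ with one-dimensional successive quotients (by lifting bases of the quotients $G_i/G_{i+1}$) and then extracts from an arbitrary basis, by a Gram--Schmidt-like collision-resolution process, a basis with exactly one vector in each gap $H_\alpha\setminus H_{\alpha+1}$; this yields the stronger conclusion that every $G_i$ is the span of a subset of the basis, from which openness of the tails follows. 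Your recursion instead maintains a direct decomposition $G=F_n\oplus C_n$ with $C_n$ open, alternating a spanning step (absorb the least unabsorbed group element into $F_{n+1}$, which forces $\bigcup_m F_m=G$) with a splitting step (peel off an index-two open subgroup $C_{n+1}\supset G_i\cap C_n$ avoiding the new vector $e_n$); the identification $H_n=C_n$, which follows from $F_n\cap C_n=\{\mathbf 0\}$ and spanning, then gives openness directly. Your version is more elementary and self-contained --- it avoids the lexicographic well-order on the refined index set and the bookkeeping of occupied slots --- at the cost of not producing a basis adapted to the subgroups $G_i$ themselves; for the lemma as stated that extra structure is not needed. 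The individual steps all check out: $b\ne\mathbf 0$ because $g\notin F_n$; the hyperplane exists because the image of $b$ in $C_n/(G_i\cap C_n)$ is nonzero; and $C_{n+1}$ is open because it contains the open subgroup $G_i\cap C_n$.
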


These lemmas also imply the following assertions. 

\begin{corollary}
\label{c1}
The existence of a countable nondiscrete group $G$ satisfying condition~\eqref{eq1} for 
some representation $G\cong \bigoplus_\omega\mathbb Z_2$ implies the existence of a rapid 
ultrafilter.
\end{corollary}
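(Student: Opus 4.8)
The plan is to manufacture the rapid ultrafilter directly on $\omega$ as the image, under the map $\max$, of an ultrafilter refining the neighborhood filter of $\mathbf 0$ in $G$. Concretely, I would first observe that, because $G$ is nondiscrete and Hausdorff, $\{\mathbf 0\}$ is not open, so every neighborhood $V$ of $\mathbf 0$ meets $G\setminus\{\mathbf 0\}$; hence the punctured neighborhoods $V\setminus\{\mathbf 0\}$ form a proper filter base on the countable set $G\setminus\{\mathbf 0\}$. Let $\mathcal U$ be any ultrafilter on $G\setminus\{\mathbf 0\}$ extending this base, and let $\mathcal W=\max(\mathcal U)$ be its push-forward under $\max\colon G\setminus\{\mathbf 0\}\to\omega$. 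By the remark recorded in the Preliminaries, $\mathcal W$ is an ultrafilter on $\omega$ (note that $\max$ is surjective, since $\max e_n=n$ for each basis element $e_n$). I claim $\mathcal W$ is rapid.

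To verify rapidity I would take an arbitrary $f\colon\omega\to\omega$ and pass to a strictly increasing majorant, say $\tilde f(n)=n+\max_{i\le n}f(i)$, so that it suffices to majorize $\tilde f$. Setting $g(n)=\tilde f(2^{n+1})$, condition~\eqref{eq1} furnishes an open neighborhood $W\subseteq U_g$ of $\mathbf 0$, and continuity of addition at $(\mathbf 0,\mathbf 0)$ yields a neighborhood $V$ of $\mathbf 0$ with $2V=V+V\subseteq W\subseteq U_g$. Lemma~\ref{l1} then applies to $\tilde f$ and this $V$: the increasing enumeration $\{m_0,m_1,\dots\}$ of $M:=\max(V\setminus\{\mathbf 0\})$ satisfies $m_i>\tilde f(i)\ge f(i)$ for all $i$. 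Finally, since $V\setminus\{\mathbf 0\}\in\mathcal U$ and $V\setminus\{\mathbf 0\}\subseteq\max^{-1}(M)$, we obtain $\max^{-1}(M)\in\mathcal U$, i.e.\ $M\in\mathcal W$; thus $\mathcal W$ contains a set whose increasing enumeration majorizes $f$. As $f$ was arbitrary, $\mathcal W$ is rapid.

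The conceptual point that makes this work—and the step I would watch most carefully—is that one need not build the majorizing sets coherently by hand, as is usual in constructions of rapid ultrafilters: every punctured neighborhood lies in the fixed filter base, so its $\max$-image automatically belongs to the single ultrafilter $\mathcal W$, and Lemma~\ref{l1} guarantees that the family of these images already contains, for each $f$, a set majorizing $f$. The only remaining obligations are soft: checking that the punctured neighborhood filter is proper (which is precisely nondiscreteness), that $\max$ is onto so that $\mathcal W$ lives on all of $\omega$, and that the passage from $f$ to $\tilde f$ together with the halving $V+V\subseteq U_g$ correctly feeds the hypotheses of Lemma~\ref{l1}. I expect no serious obstacle beyond this bookkeeping, since the arithmetic heart of the argument is entirely absorbed into Lemma~\ref{l1}; note in particular that rapidity forces $\mathcal W$ to be nonprincipal automatically, so no separate freeness check is needed.
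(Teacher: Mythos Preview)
Your proposal is correct and follows essentially the same route as the paper: take a (nonprincipal) ultrafilter on $G$ refining the neighborhood filter of $\mathbf 0$, push it forward by $\max$, and observe via Lemma~\ref{l1} that the image contains, for every $f$, a set whose increasing enumeration majorizes $f$. The paper compresses all of this into two sentences, whereas you spell out the bookkeeping (passing to an increasing $\tilde f$, halving to get $2V\subset U_g$, and checking $\max(V\setminus\{\mathbf 0\})\in\mathcal W$), but the underlying idea is identical.
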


\begin{proof}
Any nonprincipal ultrafilter $\mathcal U$ on $G$ converging to zero contains 
all neighborhoods of zero. Hence $\max \mathcal U$ contains $\max V$ for any neighborhood $V$ of 
zero and is therefore rapid.
\end{proof}

It is easy to see that, for any countable Boolean group $G$ with a fixed isomorphism  
$\varphi\colon G\to \bigoplus_\omega\mathbb Z_2$ and any map $f\colon \omega\to \omega$, the set 
$$ 
C_f = G\setminus U_f = \{x\in G\setminus\{\mathbf{0}\}: \max x \le f(\min x)\} 
$$ 
is discrete in the product topology induced by $\varphi$, and $C_f\cup \{\mathbf{0}\}$ is 
closed in this topology. This, together with Lemma~\ref{l2}, implies the following corollary, which 
gives a partial answer to Protasov's question on the existence in ZFC of a countable 
nondiscrete topological group in which all discrete subsets are closed (see \cite[Chapter 13, 
Question~16]{ProtQ}. 

\begin{corollary}
\label{c2}
Let $(G, \tau)$ be a countable nondiscrete Boolean topological group. Suppose that 
$G$ has no discrete subsets with a unique limit point and contains 
a family of open subgroups $G_i \subset G$, $i \in \omega$, such that 
$\bigcap G_i = \{\mathbf{0}\}$.  Then there exists a rapid 
ultrafilter. 
\end{corollary}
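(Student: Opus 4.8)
The plan is to reduce the statement to Corollary~\ref{c1} by producing a representation $G\cong\bigoplus_\omega\mathbb Z_2$ for which condition~\eqref{eq1} holds, and then reading off a rapid ultrafilter from that corollary.

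First I would arrange the given open subgroups into a decreasing family: setting $H_i=G_0\cap\dots\cap G_i$ yields open subgroups (a finite intersection of open subgroups is an open subgroup) that decrease and still satisfy $\bigcap_i H_i=\{\mathbf 0\}$. This is exactly the hypothesis of Lemma~\ref{l2}, so I would apply it to obtain a continuous isomorphism $\varphi\colon G\to\bigoplus_\omega\mathbb Z_2$; that is, the product topology $\tau_\varphi$ induced by $\varphi$ is contained in the original topology $\tau$. I now fix this representation, so that $\min$, $\max$, and the sets $U_f$ and $C_f$ are defined.

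The heart of the argument is to verify~\eqref{eq1} for $(G,\varphi)$. Fix an arbitrary $f\colon\omega\to\omega$. By the remark preceding the corollary, $C_f$ is discrete in $\tau_\varphi$ and $C_f\cup\{\mathbf 0\}$ is $\tau_\varphi$-closed. Since $\tau_\varphi\subseteq\tau$, both properties pass to the finer topology: a $\tau_\varphi$-open set isolating a point of $C_f$ is also $\tau$-open, and the complement of a $\tau_\varphi$-closed set is $\tau$-open. Hence $C_f$ is $\tau$-discrete and $C_f\cup\{\mathbf 0\}$ is $\tau$-closed. Because $C_f$ is discrete, no point of $C_f$ is a limit point of $C_f$; and because $\overline{C_f}\subseteq C_f\cup\{\mathbf 0\}$, the only possible limit point of $C_f$ is $\mathbf 0$. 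If $\mathbf 0$ were a limit point, then $C_f$ would be a discrete set with the unique limit point $\mathbf 0$, contradicting the hypothesis. Therefore $\mathbf 0\notin\overline{C_f}$, so some $\tau$-open $W\ni\mathbf 0$ meets $C_f$ in the empty set, i.e.\ $W\subseteq G\setminus C_f=U_f$. As $f$ was arbitrary, condition~\eqref{eq1} holds.

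Finally, $G$ is nondiscrete by assumption and now satisfies~\eqref{eq1} for the representation $\varphi$, so Corollary~\ref{c1} produces a rapid ultrafilter. The step I expect to require the most care is the transfer of discreteness and closedness from $\tau_\varphi$ to $\tau$ combined with the correct bookkeeping of limit points: the no-unique-limit-point hypothesis must be applied precisely to $C_f$, whose only candidate limit point has been pinned down to $\mathbf 0$, in order to convert it into the neighborhood $W\subseteq U_f$ demanded by~\eqref{eq1}.
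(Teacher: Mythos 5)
Your proof is correct and follows exactly the route the paper intends: the paper states Corollary~\ref{c2} as a consequence of the remark that $C_f$ is discrete and $C_f\cup\{\mathbf 0\}$ is closed in the product topology, combined with Lemma~\ref{l2}, and you have simply written out that argument (including the routine step of replacing the $G_i$ by a decreasing family and the final appeal to Corollary~\ref{c1}).
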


In the next corollary, by a maximal nondiscrete group topology we mean a 
nondiscrete group topology which is maximal among all nondiscrete group topologies. 

\begin{corollary}
\label{c3}
The following statement is consistent with ZFC: 
Any countable Boolean group with a maximal 
nondiscrete group topology contains a discrete subset with a  unique limit point. 
\end{corollary}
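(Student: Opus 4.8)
The plan is to reduce Corollary~\ref{c3} to Corollary~\ref{c2} together with the cited consistency of the nonexistence of rapid ultrafilters. Since the nonexistence of rapid ultrafilters is consistent with ZFC~\cite{rapid}, it suffices to work in a model in which no rapid ultrafilter exists and to show that there, every countable Boolean group $G$ carrying a maximal nondiscrete group topology $\tau$ must contain a discrete subset with a unique limit point. I would argue by contradiction: assuming $G$ has no such subset, I aim to exhibit a countable family of open subgroups with trivial intersection, so that Corollary~\ref{c2} yields a rapid ultrafilter, contradicting the choice of model.

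The heart of the argument is the following consequence of maximality: every subgroup $H\le G$ of index $2$ is $\tau$-open. To see this, suppose $H$ is not open and consider the group topology $\tau_H$ on $G$ whose neighborhood base at $\mathbf 0$ is $\{U\cap H: U \text{ a $\tau$-neighborhood of }\mathbf 0\}$; since $H$ is a subgroup, this is a genuine Hausdorff group topology, it refines $\tau$, and it is strictly finer than $\tau$ because $H$ is $\tau_H$-open but not $\tau$-open. By maximality of $\tau$, the topology $\tau_H$ cannot be nondiscrete, so $\mathbf 0$ is isolated in $\tau_H$; equivalently $U\cap H=\{\mathbf 0\}$ for some $U$, whence $H$ is a discrete, and therefore closed, subgroup of $(G,\tau)$. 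But then, writing $G=H\sqcup(a+H)$ for some $a\notin H$, the group $G$ becomes the union of two closed discrete sets and is therefore discrete, contradicting nondiscreteness. Hence every index-$2$ subgroup is open.

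With this in hand I would finish quickly. Fixing a representation $G\cong\bigoplus_\omega\mathbb Z_2$, the coordinate kernels $K_i=\{x\in G: x_i=0\}$ are subgroups of index $2$, hence $\tau$-open by the previous paragraph, and $\bigcap_{i\in\omega}K_i=\{\mathbf 0\}$. Thus $G$ contains a countable family of open subgroups with trivial intersection. Applying Corollary~\ref{c2} to $G$ (which is nondiscrete, Boolean, and by assumption has no discrete subset with a unique limit point) produces a rapid ultrafilter, contradicting our model. Therefore $G$ does contain a discrete subset with a unique limit point, and since $G$ was an arbitrary countable Boolean group with a maximal nondiscrete topology, the displayed statement holds in this model, establishing its consistency. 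Note that extremal disconnectedness is not needed anywhere in this route.

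The step I expect to be the main obstacle — and the one carrying the real content — is the verification that maximality forces index-$2$ subgroups to be open. The delicate points there are checking that the refined topology $\tau_H$ is indeed a Hausdorff group topology strictly finer than $\tau$, so that maximality genuinely applies, together with the two standard but essential facts that a discrete subgroup of a Hausdorff group is closed and that a finite union of closed discrete sets is discrete. Everything else is bookkeeping and an appeal to Corollary~\ref{c2}.
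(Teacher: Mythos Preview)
Your argument is correct, but it follows a different route from the paper's. The paper fixes a basis $E$, lets $\mathbb{T}$ be the associated product topology, and splits into two cases: if $\tau\supset\mathbb{T}$, Corollary~\ref{c2} applies directly; if $\tau\not\supset\mathbb{T}$, maximality produces $\tau$-open $U_0$ and $\mathbb{T}$-open $V_0$ with $U_0\cap V_0=\{\mathbf 0\}$, and then one checks that each $C_f$ meets $U_0$ in only a finite set, so condition~\eqref{eq1} holds and Corollary~\ref{c1} applies. You instead prove a structural fact---maximality forces every index-$2$ subgroup to be $\tau$-open---from which $\tau\supset\mathbb{T}$ follows automatically (the coordinate kernels $K_i$ generate $\mathbb{T}$), so only Corollary~\ref{c2} is ever needed. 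In effect you show that the paper's second case is vacuous. The paper's version is slightly shorter and uses nothing beyond finiteness of the sets $\{x\in C_f:\min x\le n\}$; your version invests a little more (the refined topology $\tau_H$, closedness of discrete subgroups, discreteness of a finite union of closed discrete sets) but delivers the cleaner conclusion that $\tau$ always contains the product topology, making the appeal to Corollary~\ref{c1} unnecessary.
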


\begin{proof}
This statement is true in any model of ZFC containing no rapid ultrafilters. Indeed, let $G$ be a 
countable Boolean group with a maximal nondiscrete group topology $\tau$. Choose any basis $E$ in 
the vector space $G$. If $\tau$ contains the product topology $\mathbb T$ associated with $E$, 
then we can apply Corollary~\ref{c2}. If $\tau\not\supset  \mathbb T$, then the maximality of 
$\tau$ implies the existence of neighborhoods of zero $U_0$ in $\tau$ and $V_0$ in $\mathbb T$ with 
$U_0\cap V_0=\{\mathbf {0}\}$. For each $f\colon \omega\to \omega$, all sets 
$\{x\in C_f: \min x\le  n\}$ are finite and, hence, 
$$
\{U\cap C_f: \mathbf{0}\in U\in \tau\} \supset \{V\cap C_f: \mathbf{0}\in V\in \mathbb T\}. 
$$
Consequently, $G$ satisfies condition \eqref{eq1} with $V= U_0$, and Corollary~\ref{c1} applies.
\end{proof}

\section{Proofs}

The short proof of Lemma~\ref{l1} presented below was found and kindly communicated 
to the author by E.~A.~Reznichenko.

\begin{proof}[Proof of Lemma~\ref{l1}]
Note that if $X\subset G\setminus\{\mathbf 0\}$ and $|X|>2^k$, then there are $x, y \in X$ for which 
$\min (x+y) > k$: it suffices to take any $x$ and $y$ whose first $k$ coordinates coincide.  
This observation was essentially made in \cite[Proof of Theorem~5.19, Case~2]{ProtQ}. 

Let $\{m_0, m_1, \dots\}$ be the increasing 
enumeration of $\max(V\setminus \{\mathbf 0\})$. For 
each $i\in \omega$, choose $x_i\in V$ with $\max x_i = m_i$. We have 
$m_0>g(\min x_0)> f(\min x_0) \ge f(0)$. Let us show that $m_n> f(n)$ for any $n>0$. Take $k$ 
for which $2^k< n\le 2^{k+1}$.  It follows from the above observation that $\min(x_i + x_j) > 
k$ for some $i<j\le n$. Clearly, $\max x_j=m_j > m_i =\max x_i$ implies 
$\max(x_i + x_j)=\max x_j$. We have 
$$
f(n) \le f(2^{k+1}) = g(k) \le g(\min (x_i+x_j)) < \max (x_i+x_j) = m_j \le m_n.
$$
\end{proof}

\begin{proof}[Proof of Lemma~\ref{l2}]
We treat $G$ as a vector space  over the field 
$\mathbb Z_2$ and 
the $G_i$ as its subspaces. 
Obviously, to prove the lemma, it suffices to construct 
a basis $E=\{e_n:  n\in\omega\}$ 
such that, for 
every $i\in \omega$,  there exists a $J_i\subset \omega$ for which $G_i = 
\langle e_n:n \in J_i\rangle $. Indeed, if $E$ is such a basis, then the assumption 
$\bigcap G_i=\{\mathbf{0}\}$ implies $\bigcap J_i = \varnothing$, and all linear spans $\langle 
e_k: k \ge n\rangle $ (which form a base of neighborhoods of zero in the product topology 
associated with the basis $E$) are open as subgroups with nonempty interior. 

In each (nontrivial) quotient space $G_i/G_{i+1}$, we 
take a basis $\{\varepsilon_\alpha: \gamma\in I_i\}$, where 
$|I_i|= \dim G_i/G_{i+1}$, and let 
$e_\alpha$ be representatives of 
$\varepsilon_\gamma$ in $G_i$. We assume the 
(at most countable) index sets $I_i$ to be well ordered and disjoint,  
let $I=\bigcup_{i\in \omega}I_i$, and endow $I$ with the lexicographic 
order (for $\alpha, \beta\in I$, we say that 
$\alpha<\beta$ if $\alpha\in I_i$, 
$\beta\in I_j$, and either $i<j$ or $i=j$ and $\alpha< \beta$ 
in $I_i$). For any $i\in \omega$ and $\alpha\in I_i$, we define 
$H_\alpha$ to be the subspace of $G$ spanned by $\{e_\beta: \alpha 
\in I_i, \beta\ge \alpha\}$ and $G_{i+1}$. Thus, 
$H_\beta$ is defined for each $\beta\in I$; moreover, if $\beta, \gamma \in I$ 
and $\beta < \gamma$ in $I$, then $H_\beta\supset H_\gamma$, and 
if $\alpha$ is the least element of $I_i$, then $H_\alpha=G_i$. 
This means that the subspaces $H_\alpha$ form a decreasing (with respect 
to the order induced by $I$) chain of subspaces refining the chain 
$G_0\supset G_1\supset\dots$\,.  Note that the lexicographic order on $I$ 
is a well-order, so for each $\alpha\in I$, its immediate successor 
$\alpha+1$ is  defined; by construction, we have 
$\dim H_{\alpha}/H_{\alpha+1}= 1$ for every $\alpha \in I$. 

Clearly, it suffices to construct a basis 
$E'=\{e'_\alpha: \alpha\in I\}$ with the property
$$
e'_\alpha\in H_\alpha\setminus H_{\alpha+1}
 \qquad\mbox{for every}\quad  \alpha\in I; 
\eqno{(\star)}
$$
the required basis $E$ is then obtained by reordering $E'$. Moreover, 
property~$(\star)$ ensures the linear independence of $E'$, so it is sufficient 
to construct a set of vectors spanning $G$ with this property. 

Take any basis $E''= \{e''_n: n\in \omega\}$ in $G$. We construct $E'$ 
by induction on $n$. 

Let $\alpha_0$ be the (unique) element of $I$ for which $e''_0\in 
H_{\alpha_0}\setminus H_{\alpha_0+1}$. We set $e'_{\alpha_0}=e''_0$. 

Suppose that $k$ is a positive integer 
and we have already defined elements $\alpha_i\in I$ 
and  vectors $e'_{\alpha_i}$ for $i<k$ so that 
$e'_{\alpha_i}\in H_{\alpha_i}\setminus H_{\alpha_i+1}$ and 
$\langle e'_{\alpha_0}, \dots, e'_{\alpha_{k-1}}\rangle = 
\langle e''_0,\dots, e''_{k-1}\rangle $. Take the (unique) $\alpha\in I$ 
for which $e''_k\in H_{\alpha}\setminus H_{\alpha+1}$. If $\alpha$ 
is unoccupied, that is, $\alpha \ne \alpha_i$ 
for any $i<k$, then we set $\alpha_k=\alpha$ and $e'_{\alpha_k}=e''_k$. If 
$\alpha$ is already occupied, i.e., $\alpha=\alpha_m$ for some $m<k$, then 
we take the (unique) $\beta\in I$ for which $e''_k+e'_{\alpha_m}\in   
H_{\beta}\setminus H_{\beta+1}$. (Clearly, $\beta>\alpha$, because 
$\dim H_{\alpha}/H_{\alpha+1}=1$ and $e''_k, e'_{\alpha_m} 
\in H_{\alpha}\setminus H_{\alpha+1}$). If $\beta$ is unoccupied, 
then we set $\alpha_k= \beta$ and $e'_{\alpha_k}=e''_k+e'_{\alpha_m}$; if 
$\beta= \alpha_l$ for $l<k$, then we take the (unique) $\gamma \in I$ for 
which $e''_k+e'_{\alpha_m}+e'_{\alpha_l}\in   
H_{\gamma}\setminus H_{\gamma+1}$ (clearly, $\gamma>\beta>\alpha$), and 
so on.  Only finitely many ($k-1$) indices from $I$ are occupied; therefore, 
after finitely many steps, we obtain $e''_k+e'_{\alpha_m}+e'_{\alpha_l}+
\dots + e'_{\alpha_s}\in H_{\delta}\setminus H_{\delta+1}$ for an unoccupied 
index $\delta$. We set $\alpha_k=\delta$ and $e'_{\alpha_k}= 
e''_k+e'_{\alpha_m}+e'_{\alpha_l}+\dots +e'_{\alpha_s}$. 

As a result, we obtain a set of vectors $E'=\{e'_{\alpha_n}:n\in \omega\}$ 
such that  
$e'_{\alpha_n}\in H_{\alpha_n}\setminus H_{\alpha_n+1}$ and 
$\langle e'_{\alpha_0}, \dots, e'_{\alpha_n}\rangle =
\langle e''_0, \dots, e''_n\rangle $ for every $n\in \omega$. The 
latter means that $E'$ spans $G$, because so does the basis 
$E''$. 

Formally, it may happen that not all of the indices $\alpha\in I$ 
are occupied, that is, $\{\alpha_n: n\in\omega\}=J\subsetneq I$. In 
this case, we take arbitrary  vectors $e'_\alpha\in H_\alpha\setminus H_{\alpha+1}$ for $\alpha\in 
I\setminus J$ and put them to $E'$. The set $E'$ thus enlarged 
satisfies condition $(\star)$ and is therefore linearly independent; 
thus, it cannot differ from the initial $E'$, because the latter 
spans $G$, and $J$ in fact coincides with $I$, i.e., 
$E'=\{e'_{\alpha_n}:n\in \omega\}=\{e'_\alpha:\alpha\in I\}$. 
This completes the proof of the lemma.  
\end{proof}

\begin{proof}[Proof of the theorem]
Let $G$ be a countable extremally disconnected  Boolean group, and let  
$G_i \subset G$, $i \in \omega$, be its open (and hence closed) subgroups such that 
the intersection $\bigcap_{i\in \omega} G_i$ has 
empty interior. Then the quotient of $G$ by $\bigcap_{i\in \omega} G_i$ is Hausdorff, nondiscrete,  
and extremally disconnected (being an open image of an extremally disconnected group). 
Thus, we can assume without loss of generality that   $\bigcap_{i\in 
\omega} G_i=\{\mathbf {0}\}$. We can also assume that $G_{i+1}\subset G_i$. According to 
Lemma~\ref{l2}, there exists a continuous isomorphism $\varphi\colon G\to 
\bigoplus_\omega\mathbb Z_2$. 

If $G$ satisfies condition~\eqref{eq1} for $\min$ and $\max$ associated with this isomorphism, 
then the existence of a rapid ultrafilter follows by Lemma~\ref{l1}. Suppose that $G$ does not 
satisfy~\eqref{eq1}, i.e., there exists a function $f\colon \omega\to \omega$ such that 
$\mathbf{0}$ is a limit point of $C_f$, where 
$$
C_f = U_f\setminus \{\mathbf{0}\}=\{x\in G\setminus \{\mathbf{0}\}: \max x\le f(\min x)\}.
$$
Let us enumerate $C_f$ as $\{c_0, c_1, \dots\}$ in such a way that
\begin{equation}
\label{eq3}
\min c_k < \min c_n \implies k< n.
\end{equation}
This can be done, because all sets of the form $\{x\in C_f: \min x = n\}$ are finite. For each $n 
\in \omega$, we set 
$$
U_{c_n}=\{x\in G: \min x > f(\min c_n)\}\cup \{\mathbf{0}\}.
$$
All $U_{c_n}$ are open neighborhoods of zero in the product topology on $G$ induced by $\varphi$ 
(and, therefore, in the topology of $G$). Clearly, 
$$
(c_k + U_{c_k}) \cap (c_n + U_{c_n}) = \varnothing \quad \text {for}\quad k\ne n.
$$
Thus, in Zelenyuk's terminology~\cite{Zelenyuk}, $C_f$ is a strongly discrete set in the extremally 
disconnected group $G$, and according to \cite[Lemma~2]{Zelenyuk}, the subsets of $C_f$ containing 
$\mathbf{0}$ in their closures form an ultrafilter on $C_f$. Let us denote it by $\mathcal U$. In 
the proof of Theorem~3 of \cite{Zelenyuk} Zelenyuk showed that this ultrafilter is partially 
selective. This means that there exists a map $u\colon C_f \to \mathcal{U}$ such that, given any 
partition $C_f = \bigsqcup_{i\in \omega}A_i$ with $A_i \notin \mathcal{U}$, there exists 
an $A\in \mathcal{U}$ for which $A\cap A_i\cap u(x)=\varnothing$ whenever $x\in A\cap A_i$. 
(Formally, Zelenyuk's definition of a partially selective ultrafilter 
differs from that given above, but it is easy to see that these definitions are equivalent.) 
Substituting $D=C_f$ in Zelenyuk's proof of his Theorem~3 in~\cite{Zelenyuk} (no other 
substitutions or changes are needed), we see that, for any partition 
$C_f = \bigsqcup_{i\in \omega}A_i$ with $A_i \notin \mathcal{U}$, there exists an $A\in 
\mathcal{U}$ such that, for any $i\in \omega$, we have $A\cap A_i \cap U_c=\varnothing $ whenever 
$c\in A\cap A_i$ (in our terminology, this means that the partial selectivity of $\mathcal{U}$ is 
witnessed by the map $c\mapsto U_c$.)  

Note that, by virtue of the condition~\eqref{eq3} on the enumeration of $C_f$, we have 
$$
U_{c_n}\cap C_f = \bigl\{c_i: i\ge \min\{j: \min c_j> f(\min c_n)\}\bigr\}
$$
for each $n$. Bearing this in mind and using the natural bijection $C_f\rightleftarrows \omega$ 
defined by $c_n\mapsto n$, we can summarize the preceding paragraph as follows. To each $n\in 
\omega$ we can assign an $m_n \in \omega$ so that there exists an ultrafilter $\mathcal V$ on 
$\omega$ with the following property: 
\begin{equation}
\label{eq4}
\begin{aligned}
&\text{for any partition  $\omega = \bigsqcup_{i\in \omega}A_i$ with $A_i \notin 
\mathcal{V}$,}\\ 
&\text{there exists an $A\in \mathcal V$ such that, for any $i\in \omega$,}\\
&\text{$n\in A\cap A_i$ implies $A\cap A_i\cap \{j: j \ge m_n\}=\varnothing$.} 
\end{aligned} 
\end{equation} 
This means, in particular, that $\mathcal V$ is a $P$-point ultrafilter. It remains to apply the 
following lemma.

\begin{lemma}
For any ultrafilter $\mathcal{U}$ on $\omega$ satisfying condition~\eqref{eq4} for some 
correspondence $n\mapsto m_n$, there exists a map $\Phi\colon \omega\to \omega$ such that 
$\Phi(\mathcal{U})$ is Ramsey. 
\end{lemma}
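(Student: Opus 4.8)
The plan is to construct $\Phi$ as a finite-to-one collapse of a fast-growing interval partition of $\omega$, chosen so that condition~\eqref{eq4} forces the pushforward ultrafilter to be selective. First I would define a strictly increasing sequence $0=N_0<N_1<N_2<\dots$ recursively by
$$
N_{k+1} = 1 + \max\bigl(\{N_k\}\cup\{m_j : j < N_k\}\bigr),
$$
so that $N_{k+1}>N_k$ and $m_j < N_{k+1}$ whenever $j<N_k$. Let $\Phi\colon\omega\to\omega$ send each $n\in[N_k,N_{k+1})$ to $k$. Since every interval $[N_k,N_{k+1})$ is finite and nonempty, $\Phi$ is finite-to-one and onto; hence $\mathcal W=\Phi(\mathcal U)$ is a nonprincipal ultrafilter on $\omega$. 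It remains to verify that $\mathcal W$ is Ramsey.

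To that end I would take an arbitrary partition $\omega=\bigsqcup_{i\in\omega}B_i$ with $B_i\notin\mathcal W$ and look for a selector in $\mathcal W$. Pulling back along $\Phi$ gives a partition $\omega=\bigsqcup_{i\in\omega}\Phi^{-1}(B_i)$, and $B_i\notin\mathcal W$ means precisely $\Phi^{-1}(B_i)\notin\mathcal U$. Thus condition~\eqref{eq4} applies and yields a set $A\in\mathcal U$ such that, for every $i$ and every $n$ in the block $A\cap\Phi^{-1}(B_i)$, no element $j$ of that same block satisfies $j\ge m_n$.

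The heart of the argument, and the step I expect to be the main obstacle, is to show that $\Phi(A)$ meets each $B_i$ in at most two consecutive integers. Suppose $k<k'$ both lie in $\Phi(A)\cap B_i$; choose $n,n'\in A$ with $\Phi(n)=k$ and $\Phi(n')=k'$, so that $n\in[N_k,N_{k+1})$, $n'\in[N_{k'},N_{k'+1})$, and $n,n'$ lie in the common block $A\cap\Phi^{-1}(B_i)$. If $k'\ge k+2$, then $n<N_{k+1}\le N_{k'-1}$, whence the defining property of $(N_k)$ gives $m_n<N_{k'}\le n'$; thus $n'\ge m_n$ while $n,n'$ lie in the same block, contradicting the conclusion of~\eqref{eq4}. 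Therefore any two distinct elements of $\Phi(A)\cap B_i$ differ by exactly $1$, so this set has at most two elements, and if it has two they are consecutive. The remaining difficulty is exactly the adjacent case $k'=k+1$, which the growth of $(N_k)$ cannot exclude and which the last step must absorb.

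Finally I would eliminate the adjacency by a parity refinement. Writing $\Phi(A)=E\sqcup O$, where $E$ and $O$ are the even and odd members of $\Phi(A)$, exactly one of $E,O$ belongs to $\mathcal W$, say $E$. Since $\Phi(A)\cap B_i$ is contained in a pair of consecutive integers, it contains at most one even number, so $|E\cap B_i|\le 1$ for every $i$. As $A\in\mathcal U$ forces $\Phi(A)\in\mathcal W$ and hence $E\in\mathcal W$, the set $E$ is the desired selector, showing that $\mathcal W=\Phi(\mathcal U)$ is Ramsey.
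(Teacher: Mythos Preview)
Your argument is correct and follows essentially the same route as the paper: define a fast-growing interval partition from the map $n\mapsto m_n$, collapse along it, pull back a partition, apply condition~\eqref{eq4}, and use a parity split to kill the one remaining adjacent case. The only cosmetic differences are that the paper iterates $m$ to define the breakpoints (after assuming the $m_n$ increase) and performs the parity refinement on $A$ before pushing forward rather than on $\Phi(A)$ afterwards; your formulation avoids the monotonicity WLOG, but the underlying mechanism is the same.
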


\begin{proof}
We set $M_0= m_0$, $M_1= m_{m_0}$, $M_2= m_{m_{m_0}}$, and so on; for positive $n\in \omega$, we set 
$$
M_n= m_{{}_{M_{n-1}}}, \quad\text{i.e.,}\quad
M_n = m_{\underbrace{{}_{m_{m_{\dots_{\rlap{\tiny${}_m$}}}}}}_{\text{$n$ times}}}{\vrule depth 9pt 
width0pt}_{\,\,\tiny{{}_0}}.
$$
Without loss of generality, we can assume that the $m_n$ strictly increase.  
In this case, the $M_n$ strictly increase as well. The required map $\Phi$ can be defined 
as follows: given an integer $i\in \omega$, we find the interval $[M_n, M_{n+1}-1]$ to which it 
belongs and set $\Phi(i) = n$.  The map $\Phi$ thus defined contracts each interval of the form 
$[M_n, M_{n+1}-1]$ to the point $n$ and is surjective.  

Let us show that the ultrafilter $\Phi(\mathcal U)$ is Ramsey. Consider a partition $\omega = 
\bigsqcup_{i\in \omega} B_i$ with $B_i\notin \Phi(\mathcal U)$.  We must choose $B\in \Phi({\mathcal 
U})$ so that $|B\cap B_i|\le 1$ for all $i\in \omega$. Let $A_i = \Phi^{-1}(B_i)$. We have 
$A_i\notin \mathcal U$ and $\omega=\bigsqcup_i A_i$. Take $A\in \mathcal U$ as in~\eqref{eq4}, i.e., 
such that, for any $i\in \omega$, $n\in A\cap A_i$ implies $A\cap A_i\cap \{j: j\ge 
m_n\}=\varnothing$. Since $\mathcal U$ is an ultrafilter, we can choose $A$ so that either 
$$
A\cap \bigcup  \{[M_n, M_{n+1}-1]: \text{$n$ is odd}\}= \varnothing
$$
or 
$$
A\cap \bigcup  \{[M_n, M_{n+1}-1]: \text{$n$ is even}\}= \varnothing.
$$
Suppose for definiteness that $A$ satisfies the former condition. Let us show that $|\Phi(A)\cap 
B_i|\le 1$ for all $i\in \omega$. Consider any nonempty intersection $\Phi(A)\cap B_i$. Let $n$ be 
its least element. We have $n=\Phi(k)$ for some $k\in A\cap A_i$. By assumption, $A\cap A_i\cap 
\{j: j\ge m_k\}=\varnothing$. We have: $k\in [M_n, M_{n+1}-1]$ (because $\Phi(k) = n$), $n$ is 
even (by the choice of $A$), $A\cap [M_{n+1}, M_{n+2}-1]=\varnothing$, and 
$m_k\in [M_{n+1}, M_{n+2}-1]$ (because $k< 
M_{n+1}$ and  $k\ge M_{n}$). Hence $A\cap A_i \cap \{j: j\ge 
M_{n+1}\}=\varnothing$, and 
$$
\Phi(A) \cap \Phi(A_i) \cap \Phi(\{j: j\ge 
M_{n+1}\})=\Phi(A) \cap B_i \cap \{j: j\ge n+1\}=\varnothing.
$$
This means that $\Phi(A) \cap B_i = \{n\}$.
\end{proof}

\end{proof}

\section*{ACKNOWLEDGMENTS}

The author is very grateful to Evgenii Reznichenko, who suggested 
the short proof of the main lemma, which greatly shortened and simplified the exposition, and to 
Anton Klyachko for useful discussions.


\begin{thebibliography}{0}

\bibitem{Arh}
A. V. Arhangel'skii, 
``An extremally disconnected bicompactum of weight $\mathfrak c$ 
is inhomogeneous,'' 
Soviet Math. Dokl. \textbf{8}, 897--900 (1967).

\bibitem{Sirota} 
S. Sirota, 
``The product of topological groups and extremal 
disconnectedness,''  
Math. USSR-Sb. \textbf{8}  
(2), 169--180  (1969).


\bibitem{Louveau} 
A. Louveau, 
``Sur un article de S. Sirota,'' Bull. Sci. Math. 
(France) \textbf{96}, 3--7 (1972).


\bibitem{Malykhin} 
V. I. Malykhin, 
``Extremally disconnected and similar groups,''
Soviet Math. Dokl.  \textbf{16}, 
21--25 (1975).

\bibitem{Malykhin2}
V. I. Malykhin,
``On extremally disconnected topological groups,''
Russian Math. Surveys \textbf{34} (6), 67--76 (1979).

\bibitem{Zel1} 
E. G. Zelenyuk, 
``Topological groups with finite 
semigroups of ultrafilters,'' 
Mat. Studii \textbf{6},   41--52 (1996).

\bibitem{Zel2} 
E. G. Zelenyuk, 
``Extremal ultrafilters and 
topologies on groups,'' 
Mat. Studii 
\textup{14}, 121--140 (2000).

\bibitem{Shelah} 
S. Shelah,  \textit{Proper Forcing} (Springer-Verlag, Berlin, 1982).

\bibitem{rapid}
A. W. Miller,  ``There are no $Q$-points in Laver's model for the Borel 
conjecture,'' Proc. Amer. Math. Soc., \textbf{78} (1), 103--106, 1980. 

\bibitem{ProtQ}
Y. G. Zelenyuk, \textit{Ultrafilters and Topologies on Groups} (De Gruyter, Berlin, 2011).

\bibitem{Zelenyuk}
Y. Zelenyuk, ``On extremally disconnected topological groups,'' Topology Appl. \textbf{153},  
2382--2385 (2006).

\end{thebibliography}
\end{document}